\newtheorem{theorem}{Theorem}
\newtheorem{corollary}[theorem]{Corollary}
\newtheorem{proposition}[theorem]{Proposition}
\newenvironment{proof}[1][Proof]{\noindent \textbf{#1.} }{\  \rule{0.5em}{0.5em}}
\begin{document}

\title{Explicit formulas for Killing magnetic curves in Heisenberg group%
\thanks{%
The paprer was supported by Laboratory of fundamental and applied
mathematics, University of Oran}}
\author{\textsc{Khadidja Derkaoui}$^{1}$\thanks{%
Corresponding author} and \textsc{Fouzi Hathout}$^{2}$ \\
$^{1}${\small Department of mathematics, university of Chlef, Algeria}\\
{\small Email: derkaouikhdidja248@hotmail.com}\\
$^{2}${\small Department of mathematics, university of Sa\"{\i}da, Algeria}\\
{\small Email: f.hathout@gmail.com}}
\date{}
\maketitle

\begin{abstract}
In this paper, We present the geometry three dimensional Heisenberg group $(%
\mathbb{H}_{3},g)$ and its geodesics curves. After, we study the Killing
magnetic curves and some geodesic Killing magnetic curves with its explicit
formulas for such curves.

\textbf{Key words:} Heisenberg group; geodesic Killing magnetic curves;
Killing vector fields; Killing magnetic curves.

\textbf{MSC: }53A04, 53C25
\end{abstract}

\section{Introduction}

In physics, particularly in electromagnetism, the trajectory of charged
particles moving under the action of a magnetic fields makes an important
research topic. In geometry, this trajectory in any manifold is known as a
magnetic curve.

On a Riemannian manifold, the magnetic curve $\gamma $ is modeled as a
solution of two order differential equation $\nabla _{\gamma ^{\prime
}}\gamma ^{\prime }=\varphi \left( \gamma ^{\prime }\right) $ known as
\textsc{Lorentz} equation, where $\varphi $ is $(1,1)$-tensor field that
present the \textsc{Lorentz} force associated to the magnetic fields $F$.
The magnetic curve generalise the geodesic curves in following way: When the
particles move under the absence of the magnetic fields so freely only under
the influence of gravity (i.e. $\varphi \equiv 0$), the \textsc{Lorentz}
equation is exactly the geodesic equation $\nabla _{\gamma ^{\prime }}\gamma
^{\prime }=0.$

The magnetic curve was studied intensively in different kind of manifolds in
Riemannian, Lorentzian and generally in pseudo-Riemannian concept. (See \cite%
{br}, \cite{dim}, \cite{dimn}, \cite{o})

Furthermore, if the magnetic fields $F$ corresponds to a killing vector,
then the trajectory of particles moving under the action of $F$ is called
Killing magnetic curve.

Interesting results on Killing magnetic curves are given in Euclidian
3-space (\cite{dm}), Minkowski 3-space (\cite{dm1}), $\mathbb{S}^{2}\times
\mathbb{R}$ (\cite{mn}), $SL(2,\mathbb{R})$ (\cite{e}), Sol Space (\cite{ei2}%
), warped product manifold (\cite{isc}), almost paracontact manifold (\cite%
{cmp}), Almost Cosymplectic Sol Space (\cite{ei}) and in Walker manifolds (%
\cite{bd}).

The framework of the paper is to study the Killing magnetic curves in $3$%
-dimensional Heisenberg group.

The contents of this study is the following: we present in the second
section a general notions and definitions of killing magnetic curves.

The section $3$ is devoted to the geometry of Heisenberg group and its
contact structure.

Finally in the section $4$, we classifies fourth kind of Killing magnetic
curves, we give its explicit parametric formulas at each kind in different
subsections and we close it with figures presented in Euclidian space.

\section{Preliminaries}

On Riemannian manifold $(M,g),$ the magnetic curves is a trajectories of the
charged particles moving under the action of the magnetic fields $F$ which
can be represented by a closed $2$-form
\begin{equation}
F(X,Y)=g(\varphi \left( X\right) ,Y)  \label{0.01}
\end{equation}%
where $X,Y$ are vector fields on $(M,g)$ and $\varphi $ is skew-symmetric $%
(1,1)$-tensor field. that present the \textsc{Lorentz} force associated to $%
F $. For a regular curve $\gamma :I\subset \mathbb{R}\rightarrow (M,g)$, $%
\gamma $ is called magnetic curve if%
\begin{equation}
\nabla _{\mathbf{t}}\mathbf{t=}\varphi \left( \mathbf{t}\right)  \label{0.1}
\end{equation}%
known as the Lorentz equation, where $\nabla $ is the Levi-Civita connection
associated to $g\ $and $\mathbf{t}=\gamma ^{\prime }$ is the speed vector of
$\gamma .$ The magnetic curve generalise the notion of geodesic curve under
arc-length parametrization.

From the property of magnetic curve $\gamma $%
\begin{equation*}
\frac{d}{dt}g(\gamma ^{\prime },\gamma ^{\prime })=0
\end{equation*}%
$\gamma $ has a constant speed vector. In particular, if $\gamma $ is
parameterized by the arc length, then $\gamma $ is called \textit{normal
magnetic curve}.

We call $K$\textbf{\ }a Killing vector field on $M$ if it satisfy the
Killing equation%
\begin{equation*}
g(\nabla _{X}K,Y)+g(\nabla _{Y}K,X)=0
\end{equation*}%
for every vector fields $X,Y$ on $M$.

We define on M the cross product of two vector fields $X,Y$ on $M$ as

\begin{equation}
g(X\times Y,Z)=dvg(X,Y,Z)  \label{0.02}
\end{equation}%
for all vector fields $Z$ on $M$ and $dvg$ denotes the volume form on $M.$

Let $F_{K}=\emph{i}_{K}dvg$ be the Killing magnetic field corresponding to
the Killing vector field $K$ on $M$, where $\emph{i}$ is inner product.
Then, the $(1,1)$-tensor fields corresponding to the Lorentz force of $F_{K}$
is

\begin{equation}
\gamma (X)=K\times X.  \label{0.03}
\end{equation}%
Then, we can rewrite the Lorentz equation Eq.(\ref{0.1}) as%
\begin{equation}
\nabla _{\mathbf{t}}\mathbf{t=}K\times \mathbf{t}  \label{0.2}
\end{equation}%
and the it solution is called \textit{Killing magnetic curves} corresponding
to the killing vector fields $K.$

In the sequel, to simplify, we call this curve a $K$\textit{-magnetic curve.}

\section{Geometry structure of $\mathbb{H}_{3}$}

The Heisenberg group $\mathbb{H}_{3}$ is a quasi-abelian Lie group
diffeomorphic to $\mathbb{R}^{3}$ and it has the standard representation in $%
GL(3,\mathbb{R})$ as
\begin{equation*}
\mathbb{H}_{3}=\left \{ \left(
\begin{array}{ccc}
0 & x & z \\
0 & 0 & y \\
0 & 0 & 0%
\end{array}%
\right) \mid \left( x,y,z\right) \in \mathbb{R}^{3}\right \}
\end{equation*}%
endowed with the multiplication%
\begin{equation*}
(x_{1},y_{1},z_{1})(x_{2},y_{2},z_{2})=(x_{1}+x_{2},y_{1}+y_{2},z_{1}+z_{2}-x_{1}y_{2}).
\end{equation*}%
The invariant Riemannian metric with respect to the left-translations
corresponding to that multiplication is denoted by
\begin{equation}
g=\frac{1}{\lambda ^{2}}dx^{2}+dy^{2}+(xdy+dz)^{2}.  \label{1}
\end{equation}%
where $\lambda $ is a strictly positive real number. All left-invariant
Riemannian metric on the $\mathbb{H}_{3}$ is isometric to the metric $g$.

The Levi-Civita connection $\nabla $ of the metric $g$ with respect to the
left-invariant orthonormal basis%
\begin{equation}
e_{1}=\partial y-x\partial z,\  \ e_{2}=\lambda \partial x,\  \ e_{3}=\partial
z,  \label{2}
\end{equation}%
with dual basis%
\begin{equation*}
\omega ^{1}=dy;\  \  \omega ^{2}=\frac{1}{\lambda }dx;\  \  \omega ^{3}=xdy+dz.
\end{equation*}%
are given by%
\begin{equation}
\left \{
\begin{array}{l}
\nabla _{e_{1}}e_{1}=0 \\
\nabla _{e_{1}}e_{2}=\frac{\lambda }{2}e_{3} \\
\nabla _{e_{1}}e_{3}=-\frac{\lambda }{2}e_{2}%
\end{array}%
\begin{array}{l}
\nabla _{e_{2}}e_{1}=-\frac{\lambda }{2}e_{3} \\
\nabla _{e_{2}}e_{2}=0 \\
\nabla _{e_{2}}e_{3}=\frac{\lambda }{2}e_{1}%
\end{array}%
\begin{array}{l}
\nabla _{e_{3}}e_{1}=-\frac{\lambda }{2}e_{2} \\
\nabla _{e_{3}}e_{2}=\frac{\lambda }{2}e_{1} \\
\nabla _{e_{3}}e_{3}=0%
\end{array}%
\right.  \label{3}
\end{equation}%
The Lie bracket of the base $\left( e_{i}\right) _{i=\overline{1,3}}$ are
given by the following identities%
\begin{equation}
\lbrack e_{1},e_{2}]=\lambda e_{3};\  \  \ [e_{1},e_{3}]=[e_{2},e_{3}]=0
\end{equation}%
The Lie algebra of Killing vector field of $(\mathbb{H}_{3},g)$ is generated
by the following killing vectors%
\begin{eqnarray*}
\mathbf{K}_{1} &=&\partial z,\  \  \mathbf{K}_{2}=\partial y,\  \  \mathbf{K}%
_{3}=\partial x-y\partial z\ and \\
\mathbf{K}_{4} &=&\lambda ^{2}y\partial x-x\partial y-\frac{1}{2}(\lambda
^{2}y^{2}-x^{2})\partial z,
\end{eqnarray*}%
and using the Eq.(\ref{2}), we can rewrite the killing vectors in the base $%
\left( e_{i}\right) _{i=\overline{1,3}}$ as%
\begin{eqnarray}
\mathbf{K}_{1} &=&e_{3},\  \  \mathbf{K}_{2}=e_{1}+xe_{3},\  \  \  \  \mathbf{K}%
_{3}=\frac{1}{\lambda }e_{2}-ye_{3}\ and  \label{3.1} \\
\  \mathbf{K}_{4} &=&-xe_{1}+\lambda ye_{2}-\frac{1}{2}(\lambda
^{2}y^{2}-3x^{2})e_{3}.  \notag
\end{eqnarray}%
for more detail see (\cite{BZ}).

On the other hand, for the contact form%
\begin{equation}
\eta =xdy+dz=\omega ^{3}  \label{3.12}
\end{equation}%
and the $(1,1)$-tensor $\varphi $ given in base $\left( e_{i}\right) _{i=%
\overline{1,3}}$ by%
\begin{equation}
\varphi \left( e_{1}\right) =e_{2};\  \varphi \left( e_{2}\right) =-e_{1}\
and\  \varphi \left( e_{3}\right) =0  \label{3.11}
\end{equation}%
we have%
\begin{eqnarray*}
\eta \left( e_{3}\right) &=&1;\  \varphi ^{2}\left( X\right) =-X+\eta \left(
X\right) e_{3}\ and \\
g(\varphi \left( X\right) ,\varphi \left( Y\right) ) &=&g(X,Y)+\eta (X)\eta
(Y)
\end{eqnarray*}%
for any $X,Y\in \chi (\mathbb{H}_{3})$ and $\xi =e_{3}$, the Heisenberg
group $(\mathbb{H}_{3},\varphi ,\xi ,\eta ,g)$ is an almost contact
manifold. Moreover, we have
\begin{equation}
d\eta (X,Y)=g(X,\varphi \left( Y\right) )  \label{3.2}
\end{equation}%
then $(\mathbb{H}_{3},\varphi ,\xi ,\eta ,g)$ is a contact manifold and the
fundamental $2$-form $d\eta $ is closed and hence it defines a magnetic
field. For more detail see (\cite{f}, \cite{Ht}, \cite{cdpt}, \cite{m}).

\section{Killing magnetic curves in $\mathbb{H}_{3}$}

In this section, we have used two computer softwares (Wolfram Mathematica
and Scientific Workplace) to solve some differential systems and for curve
figures.

Let $\gamma (t)=(x(t),y(t),z(t)):I\subset \mathbb{R\rightarrow }\left(
\mathbb{H}_{3},g\right) $ be a regular curve. Its speed curve is
\begin{equation}
\gamma ^{\prime }(t)=\mathbf{t}=(x^{\prime }(t),y^{\prime }(t),z^{\prime
}(t))  \label{5}
\end{equation}%
from the Eq.(\ref{2}), the speed vector $\mathbf{t}$ is expressed in the
base $\left( e_{i}\right) _{i=\overline{1,3}}$ as%
\begin{equation}
\mathbf{t}=y^{\prime }e_{1}+\frac{x^{\prime }}{\lambda }e_{2}+\left(
z^{\prime }+xy^{\prime }\right) e_{3}  \label{6}
\end{equation}%
Using the connection formulas given in the Eq.(\ref{3}), the covariant
derivative of the speed vector $\mathbf{t}$ is%
\begin{equation}
\nabla _{\mathbf{t}}\mathbf{t=}\left( y^{\prime \prime }+x^{\prime }\left(
z^{\prime }+xy^{\prime }\right) \right) e_{1}+\left( \frac{x^{\prime \prime }%
}{\lambda }+\lambda y^{\prime }\left( z^{\prime }+xy^{\prime }\right)
\right) e_{2}+\left( z^{\prime }+xy^{\prime }\right) ^{\prime }e_{3}
\label{7}
\end{equation}%
We know that the equation of geodesics is a particular case of the Lorentz
equation given in Eq.(\ref{0.1}) when the Lorentz force vanishes, the
geodesics is a particular magnetic trajectories. A simple computation of the
geodesic equation $\nabla _{\mathbf{t}}\mathbf{t}=0,$ give a differential
equations system%
\begin{equation*}
S_{G}:\left \{
\begin{array}{l}
y^{\prime \prime }+x^{\prime }\left( z^{\prime }+xy^{\prime }\right) =0 \\
x^{\prime \prime }+\lambda ^{2}y^{\prime }\left( z^{\prime }+xy^{\prime
}\right) =0 \\
\left( z^{\prime }+xy^{\prime }\right) ^{\prime }=0%
\end{array}%
\right.
\end{equation*}%
where its general solutions are explicit formulas of geodesic curves $\gamma
_{G}$ in $(\mathbb{H}_{3},g)$ given by the following proposition.

\begin{proposition}
The parametric equations of the geodesic curves $\gamma _{G}$ in $(\mathbb{H}%
_{3},g)$\ is given by%
\begin{equation}
\begin{array}{l}
i.\  \gamma _{G}(t)=\left \{
\begin{array}{l}
x\left( t\right) =c_{1}t+c_{2} \\
y\left( t\right) =c_{3}t+c_{4} \\
z(t)=\frac{c_{1}c_{3}}{2}t^{2}+c_{2}c_{3}t+c_{5}%
\end{array}%
\right. \text{ or} \\
ii.\  \gamma _{G}(t)=\left \{
\begin{array}{l}
x\left( t\right) =-\frac{c_{1}}{c}e^{c\lambda t}-\frac{c_{2}}{c}e^{-c\lambda
t}+c_{3} \\
y\left( t\right) =\frac{c_{1}}{c\lambda }e^{c\lambda t}-\frac{c_{2}}{%
c\lambda }e^{-c\lambda t}+c_{4} \\
z(t)=\frac{2c_{1}c_{2}+c^{2}}{c}t-\frac{c_{3}}{\lambda c}\left(
c_{1}e^{ct\lambda }-c_{2}e^{-ct\lambda }\right) +\left( \frac{%
c_{1}^{2}-c_{2}^{2}}{2c^{2}\lambda }\right) e^{-2c\lambda t}%
\end{array}%
\right.%
\end{array}
\label{7.01}
\end{equation}%
where $c_{\overline{1,4}}$ are a real constants.
\end{proposition}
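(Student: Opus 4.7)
The plan is to exploit the third equation of $S_{G}$ as a conservation law. Setting $c := z' + xy'$, that equation immediately gives $c$ constant along every geodesic (this is essentially the momentum conjugate to the Killing vector $\mathbf{K}_{1} = \partial z$ of Eq.(\ref{3.1})). Substituting back into the first two equations of $S_{G}$ reduces the system to the linear pair
\begin{equation*}
y'' + c\,x' = 0, \qquad x'' + \lambda^{2} c\,y' = 0,
\end{equation*}
which is a homogeneous first-order linear system in $u = x'$ and $v = y'$ with constant coefficients. The natural bifurcation is then according to whether $c = 0$ or $c \neq 0$.

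In the degenerate case $c = 0$, both equations force $u$ and $v$ to be constants, so $x$ and $y$ are affine functions of $t$, producing the linear expressions for $x(t)$ and $y(t)$ in formula (i). The identity $z' = -xy'$ is then a polynomial of degree one in $t$, whose antiderivative is the quadratic expression for $z(t)$, with $c_{5}$ absorbing the remaining constant of integration.

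In the generic case $c \neq 0$, differentiating $x'' = -\lambda^{2} c\,y'$ and substituting $y'' = -c\,x'$ gives $u'' = (\lambda c)^{2} u$, whose general solution is a linear combination of $e^{c\lambda t}$ and $e^{-c\lambda t}$; the companion $v$ is then recovered algebraically from $v = -u'/(\lambda^{2} c)$. A single further integration yields the exponential expressions for $x(t)$ and $y(t)$ in (ii) after relabelling the constants. The formula for $z(t)$ follows from $z' = c - xy'$: expanding the product produces a constant part, two terms of the form $e^{\pm c\lambda t}$ and two of the form $e^{\pm 2 c\lambda t}$, each integrable in closed form.

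The step I expect to be the most delicate is this last integration for $z(t)$: the constant $-2c_{1}c_{2}/c$ coming from the cross-term $-xy'$ must combine with the conserved value $c$ to produce the linear-in-$t$ coefficient $(2c_{1}c_{2} + c^{2})/c$ displayed in the statement, and the first- and second-order exponentials have to be tracked with careful attention to signs. Apart from this book-keeping, the proof reduces to the standard theory of constant-coefficient linear ODEs applied twice.
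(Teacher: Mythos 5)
Your reduction via the conserved quantity $c = z'+xy'$ followed by the case split $c=0$ versus $c\neq 0$ and the constant-coefficient linear system for $(x',y')$ is exactly the route the paper takes (the paper merely states the system $S_G$ and its ``general solutions'' without showing the integration, so there is nothing further to compare). One caveat worth recording: if you actually carry out your case (i) integration you get $z(t) = -\tfrac{c_1c_3}{2}t^2 - c_2c_3 t + c_5$, not the printed plus signs --- the formula (i) as displayed satisfies $(z'+xy')'=2c_1c_3$ and is a geodesic only when $c_1c_3=0$ --- and in (ii) the last term should read $\tfrac{1}{2c^2\lambda}\left(c_1^2 e^{2c\lambda t} - c_2^2 e^{-2c\lambda t}\right)$; these are sign/typo defects in the stated proposition rather than gaps in your argument, but you should not claim your antiderivative ``produces'' the displayed expressions without noting the discrepancy.
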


\subsection{$\mathbf{K}_{1}$- magnetic curves\label{ss1}}

We consider $\mathbf{K}_{1}$-magnetic curves which correspond to the Killing
vector field $\mathbf{K}_{1}=e_{3}=\partial z.$

Firstly, we have the product vector%
\begin{equation}
\mathbf{K}_{1}\times \mathbf{t}=-\frac{x^{\prime }}{\lambda }e_{1}+y^{\prime
}e_{2}  \label{7.1}
\end{equation}%
We can also obtain the Eq.(\ref{7.1}) in the following way.\newline
Because $\mathbf{K}_{1}=\xi $ given from the almost contact structure $(%
\mathbb{H}_{3},\varphi ,\xi ,\eta ,g)$, the magnetic fields $F_{\mathbf{K}%
_{1}}$ corresponding to $\mathbf{K}_{1}$ is exactly the magnetic fields $F$
associated to \textsc{Lorentz} force which presented \ by the skew-symmetric
$(1,1)$-tensor $\varphi $ given by the Eq.(\ref{3.11}).

Thus, we find the right-hand side of the relation given in Eq.(\ref{7.1})%
\begin{eqnarray*}
\varphi \left( \mathbf{t}\right) &=&\varphi \left( y^{\prime }e_{1}+\frac{%
x^{\prime }}{\lambda }e_{2}+\left( z^{\prime }+xy^{\prime }\right)
e_{3}\right) \\
&=&y^{\prime }e_{2}-\frac{x^{\prime }}{\lambda }e_{1}=\mathbf{K}_{1}\times
\mathbf{t}
\end{eqnarray*}

Now, to find the explicit formulas of $\mathbf{K}_{1}$-magnetic curves, we
must solve the differential equations system denoted by $(S_{1})$, given
from the Eqs.(\ref{7}, \ref{7.1}) and \textsc{Lorentz} equation%
\begin{equation*}
\nabla _{\mathbf{t}}\mathbf{t}=\mathbf{K}_{1}\times \mathbf{t}
\end{equation*}%
The system $(S_{1})$ is
\begin{equation*}
S_{1}:\left \{
\begin{array}{l}
y^{\prime \prime }+x^{\prime }\left( \left( z^{\prime }+xy^{\prime }\right) +%
\frac{1}{\lambda }\right) =0 \\
x^{\prime \prime }+y^{\prime }\left( \lambda ^{2}\left( z^{\prime
}+xy^{\prime }\right) -\lambda \right) =0 \\
\left( z^{\prime }+xy^{\prime }\right) ^{\prime }=0%
\end{array}%
\right.
\end{equation*}%
After integrating the third equation $\left( S_{1_{3}}\right) $ and replaced
them in the equations $\left( S_{1_{1,2}}\right) $, the system $(S)$ turns to%
\begin{equation*}
S_{1}:\left \{
\begin{array}{l}
y^{\prime \prime }+x^{\prime }\left( c+\frac{1}{\lambda }\right) =0 \\
x^{\prime \prime }+y^{\prime }\lambda \left( \lambda c-1\right) =0 \\
z^{\prime }+xy^{\prime }=c\  \ (constant)%
\end{array}%
\right.
\end{equation*}%
the general solution of the equations $(S_{1_{1,2}})$ is%
\begin{equation}
\left \{
\begin{array}{c}
x\left( t\right) =\frac{\lambda }{\lambda c+1}\left( c_{1}e^{\sqrt{\lambda
^{2}c^{2}-1}t}+c_{2}e^{-\sqrt{\lambda ^{2}c^{2}-1}t}\right) +c_{3} \\
y\left( t\right) =\frac{1}{\sqrt{\lambda ^{2}c^{2}-1}}\left( c_{1}e^{\sqrt{%
\lambda ^{2}c^{2}-1}t}-c_{2}e^{-\sqrt{\lambda ^{2}c^{2}-1}t}\right) +c_{4}%
\end{array}%
\right.  \label{8}
\end{equation}%
where $c_{\overline{1,4}}$ are a real constants, $\left \vert \lambda
c\right \vert >1$ (if $\left \vert \lambda c\right \vert <1$ the system
don't admit a real solutions) and $\lambda c\neq \pm 1$ (we studied these
cases $\lambda c=\pm 1$ later).

Substituting the Eq.(\ref{8}) in the equation $(S_{1_{3}})$ and by an
integration with respect to $t$, we have%
\begin{equation*}
z(t)=\left( c+\frac{2\lambda }{\lambda c+1}c_{1}c_{2}\right) t+\frac{1}{%
\sqrt{\lambda ^{2}c^{2}-1}}\left(
\begin{array}{c}
c_{1}c_{3}e^{-t\sqrt{\lambda ^{2}c^{2}-1}}-c_{2}c_{3}e^{t\sqrt{\lambda
^{2}c^{2}-1}}+ \\
\frac{\lambda }{2\left( \lambda c+1\right) }\left( c_{1}^{2}e^{-2t\sqrt{%
\lambda ^{2}c^{2}-1}}-c_{2}^{2}e^{2t\sqrt{\lambda ^{2}c^{2}-1}}\right)%
\end{array}%
\right) +c_{5}
\end{equation*}%
where $c_{5}$ is real constant.

If $c=\dfrac{1}{\lambda },$ we rewrite the system $(S_{1})$ as%
\begin{equation*}
S_{1}:\left \{
\begin{array}{l}
y^{\prime \prime }+x^{\prime }\frac{2}{\lambda }=0 \\
x^{\prime \prime }=0 \\
z^{\prime }+xy^{\prime }=\dfrac{1}{\lambda }%
\end{array}%
\right.
\end{equation*}%
Its general solution is%
\begin{equation*}
\left \{
\begin{array}{l}
x\left( t\right) =c_{1}t+c_{2}, \\
y\left( t\right) =\frac{-c_{1}}{\lambda }t^{2}+c_{3}t+c_{4} \\
z\left( t\right) =\frac{2c_{1}^{2}}{3\lambda }t^{3}+c_{1}\left( \frac{c_{2}}{%
\lambda }-\frac{c_{3}}{2}\right) t^{2}+\left( \frac{1}{\lambda }%
-c_{2}c_{3}\right) t+c_{5}%
\end{array}%
\right.
\end{equation*}%
where $c_{\overline{1,5}}$ are a real constants.

If $c=\dfrac{-1}{\lambda },$ the system $(S_{1})$ is%
\begin{equation*}
S_{1}:\left \{
\begin{array}{l}
y^{\prime \prime }=0 \\
x^{\prime \prime }-2y^{\prime }\lambda =0 \\
z^{\prime }+xy^{\prime }=\dfrac{-1}{\lambda }%
\end{array}%
\right.
\end{equation*}%
with a general solution%
\begin{equation*}
\left \{
\begin{array}{l}
x\left( t\right) =c_{1}\lambda t^{2}+c_{2}t+c_{3}, \\
y\left( t\right) =c_{1}t+c_{4} \\
z\left( t\right) =\frac{c_{1}c_{2}}{2}t^{2}+\left( \dfrac{-1}{\lambda }%
+c_{3}c_{1}-\frac{c_{1}^{2}}{3}\lambda \right) t+c_{5}%
\end{array}%
\right.
\end{equation*}

Now, we can present the following theorem.

\begin{theorem}
\label{TK1}The explicit formulae of all $\mathbf{K}_{1}$-magnetic curves in $%
(\mathbb{H}_{3},g)$ are the space curves given by parametric equations:%
\newline
1.\newline
$\gamma (t)=\left(
\begin{array}{l}
x\left( t\right) =c_{1}t+c_{2}, \\
y\left( t\right) =\frac{-c_{1}}{\lambda }t^{2}+c_{3}t+c_{4} \\
z\left( t\right) =\frac{2c_{1}^{2}}{3\lambda }t^{3}+c_{1}\left( \frac{c_{2}}{%
\lambda }-\frac{c_{3}}{2}\right) t^{2}+\left( \frac{1}{\lambda }%
-c_{2}c_{3}\right) t+c_{5}%
\end{array}%
\right) $ or\newline
2. \newline
$\gamma (t)=\left(
\begin{array}{l}
x\left( t\right) =c_{1}\lambda t^{2}+c_{2}t+c_{3}, \\
y\left( t\right) =c_{1}t+c_{4} \\
z\left( t\right) =\frac{c_{1}c_{2}}{2}t^{2}+\left( \tfrac{-1}{\lambda }%
+c_{3}c_{1}-\frac{c_{1}^{2}}{3}\lambda \right) t+c_{5}%
\end{array}%
\right) $ or \newline
3.\newline
$\gamma (t)=\left(
\begin{array}{l}
x\left( t\right) =\frac{\lambda }{\lambda c+1}\left( c_{1}e^{\sqrt{\lambda
^{2}c^{2}-1}t}+c_{2}e^{-\sqrt{\lambda ^{2}c^{2}-1}t}\right) +c_{3} \\
y\left( t\right) =\frac{1}{\sqrt{\lambda ^{2}c^{2}-1}}\left( c_{1}e^{\sqrt{%
\lambda ^{2}c^{2}-1}t}-c_{2}e^{-\sqrt{\lambda ^{2}c^{2}-1}t}\right) +c_{4}
\\
z\left( t\right) =\left( c+\frac{2\lambda c_{1}c_{2}}{\lambda c+1}\right) t+%
\frac{1}{\sqrt{\lambda ^{2}c^{2}-1}}\left(
\begin{array}{c}
c_{3}\left( c_{1}e^{-t\sqrt{\lambda ^{2}c^{2}-1}}-c_{2}e^{t\sqrt{\lambda
^{2}c^{2}-1}}\right) + \\
\frac{\lambda }{2\left( \lambda c+1\right) }\left( c_{1}^{2}e^{-2t\sqrt{%
\lambda ^{2}c^{2}-1}}-c_{2}^{2}e^{2t\sqrt{\lambda ^{2}c^{2}-1}}\right)%
\end{array}%
\right) +c_{5}%
\end{array}%
\right) $\newline
where in (3), $\left \vert \lambda c\right \vert >1$ and $c,c_{\overline{1,5}%
}$ are a real constants.
\end{theorem}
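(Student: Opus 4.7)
The plan is to convert the geometric condition $\nabla_{\mathbf{t}}\mathbf{t} = \mathbf{K}_1 \times \mathbf{t}$ into the coordinate system $S_1$ already derived from Eqs.\,(\ref{7}) and (\ref{7.1}), and then integrate it. The key observation is that the third equation of $S_1$ decouples at once: integrating $(z' + xy')' = 0$ gives a first integral $z' + xy' = c$, where $c \in \mathbb{R}$ is a constant of integration. Substituting this constant into the first two equations reduces $S_1$ to a coupled linear system in the derivatives $x'$ and $y'$ with constant coefficients, namely $y'' + x'(c + 1/\lambda) = 0$ and $x'' + y'\lambda(\lambda c - 1) = 0$. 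This is the crucial simplification; the remaining work is a case analysis based on the discriminant of the associated characteristic polynomial.

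I would then split into three cases according to the value of $\lambda c$. If $\lambda c = 1$, the second equation collapses to $x'' = 0$, giving $x(t)$ affine; the first equation then forces $y(t)$ to be a quadratic polynomial, and back-substituting $z' = c - xy'$ makes $z(t)$ a cubic — this yields family $(1)$. Symmetrically, if $\lambda c = -1$, the first equation gives $y'' = 0$ so $y(t)$ is affine, $x(t)$ is quadratic, and $z(t)$ comes out quadratic, yielding family $(2)$. In the generic case $|\lambda c| > 1$ with $\lambda c \ne \pm 1$, differentiating $y'' = -x'(c + 1/\lambda)$ once and using the equation for $x''$ gives $y''' = (\lambda^2 c^2 - 1) y'$; setting $u = y'$ produces a second-order linear ODE whose characteristic roots are $\pm\sqrt{\lambda^2 c^2 - 1}$. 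This delivers the exponential solutions for $y'$ displayed in Eq.\,(\ref{8}), and then $x'$ is recovered algebraically from $y''$ (the factor $\lambda/(\lambda c + 1)$ arises from $\lambda(1-\lambda c)/(\lambda^2 c^2 - 1) = -\lambda/(\lambda c + 1)$).

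The final step in each case is to recover $z(t)$ by integrating $z' = c - xy'$. In Cases A and B this is immediate polynomial integration. In Case C it is the main technical obstacle: one must expand the product $xy'$, which mixes constants, single exponentials $e^{\pm \alpha t}$, and squared exponentials $e^{\pm 2\alpha t}$ with $\alpha = \sqrt{\lambda^2 c^2 - 1}$, and then integrate termwise while tracking the constants $c_1, c_2, c_3$ carefully so that the coefficients in the statement of the theorem (in particular the factor $\frac{\lambda}{2(\lambda c + 1)}$ in front of the squared exponential terms) come out correctly. This is mechanical but error-prone, so I would organise the computation by first writing $xy' = \frac{\lambda}{\lambda c+1}\bigl(c_1 e^{\alpha t}+ c_2 e^{-\alpha t}\bigr)y' + c_3 y'$ and integrating the three pieces separately. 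The case $|\lambda c| < 1$, in which the roots $\pm\sqrt{\lambda^2 c^2 - 1}$ are purely imaginary and one would expect trigonometric solutions, is excluded in the statement; this is the part of the claim I would want to double-check before finalising the proof.
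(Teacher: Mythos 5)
Your proposal follows essentially the same route as the paper's own derivation: integrate $(S_{1_3})$ to obtain the first integral $z'+xy'=c$, substitute to get a constant-coefficient linear system in $x'$ and $y'$, split into the cases $\lambda c=1$, $\lambda c=-1$ and $|\lambda c|>1$, and recover $z$ at the end by integrating $z'=c-xy'$. Your instinct to double-check the excluded case $|\lambda c|<1$ is well-founded: there the reduced equation $y'''=(\lambda^{2}c^{2}-1)\,y'$ has perfectly real trigonometric solutions $y'=A\cos(\omega t)+B\sin(\omega t)$ with $\omega=\sqrt{1-\lambda^{2}c^{2}}$, so the paper's dismissal of that case (``the system don't admit a real solutions'') is incorrect, and a proof of the theorem as literally stated --- claiming to list \emph{all} $\mathbf{K}_{1}$-magnetic curves --- would in fact require adding that fourth, oscillatory family.
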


\begin{corollary}
The curve presented in assertion (1) of the Theorem (\ref{TK1}) for $c_{1}=0$
and $\lambda =\frac{1}{2c_{2}c_{3}}$ is a geodesic $\mathbf{K}_{1}$-magnetic
curve in $(\mathbb{H}_{3},g).$
\end{corollary}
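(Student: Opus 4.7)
The plan is to prove the claim by direct substitution: specialize the parameters in assertion (1) of Theorem \ref{TK1} and match the resulting parametric equations with those of a geodesic from Proposition 1, case (i), so that the curve simultaneously solves the Lorentz equation for $\mathbf{K}_{1}$ and the geodesic equation.

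First, I would set $c_{1}=0$ in the formulas of assertion (1) of Theorem \ref{TK1}. This instantly kills the $t^{2}$ contribution in $y(t)$ together with both the $t^{3}$ and the $t^{2}$ contributions in $z(t)$, reducing the curve to
\begin{equation*}
x(t) = c_{2}, \qquad y(t) = c_{3}t + c_{4}, \qquad z(t) = \Bigl(\tfrac{1}{\lambda} - c_{2}c_{3}\Bigr)\,t + c_{5}.
\end{equation*}

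Second, I would impose the relation $\lambda = \frac{1}{2c_{2}c_{3}}$ (which tacitly requires $c_{2}c_{3}\neq 0$). Then $\frac{1}{\lambda} - c_{2}c_{3} = 2c_{2}c_{3} - c_{2}c_{3} = c_{2}c_{3}$, so the curve becomes
\begin{equation*}
x(t) = c_{2}, \qquad y(t) = c_{3}t + c_{4}, \qquad z(t) = c_{2}c_{3}\,t + c_{5},
\end{equation*}
which is precisely the geodesic family of Proposition 1, case (i), obtained by letting the parameter $c_{1}$ there vanish. Hence the specialized curve is a geodesic.

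Since the original curve is a $\mathbf{K}_{1}$-magnetic curve by virtue of belonging to the family produced in Theorem \ref{TK1}, while the specialized curve coincides with a geodesic from Proposition 1, the specialization is simultaneously a $\mathbf{K}_{1}$-magnetic curve and a geodesic, which is exactly the assertion. The only point requiring care is the algebraic collapse of the linear coefficient $\tfrac{1}{\lambda}-c_{2}c_{3}$ to $c_{2}c_{3}$ under the prescribed choice of $\lambda$; once this identity is checked, the rest of the argument is pure inspection of the parametric formulas, with no genuine obstacle.
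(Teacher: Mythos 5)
Your argument is correct and takes essentially the same route as the paper's own proof: the paper simply declares the corollary a direct consequence of the geodesic formulas in Eq.~(\ref{7.01}) and Theorem \ref{TK1}, and you have carried out exactly that substitution, checking that $c_{1}=0$ and $\lambda=\frac{1}{2c_{2}c_{3}}$ collapse assertion (1) to $x=c_{2}$, $y=c_{3}t+c_{4}$, $z=c_{2}c_{3}t+c_{5}$, which is case (i) of Proposition 1 with its own $c_{1}=0$. The only caveat, which applies equally to the paper's one-line proof, is that the whole argument rests on the stated form of Proposition 1(i) rather than on a direct verification of $\nabla_{\mathbf{t}}\mathbf{t}=0$ for the specialized curve.
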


\begin{proof}
It's a direct consequence from the geodesic curve formulas in Eq.(\ref{7.01}%
) an the Theorem (\ref{TK1}).
\end{proof}

We present in $(\mathbb{R}_{3},g_{euc})$ some examples of $\mathbf{K}_{1}$%
-magnetic curve in $(\mathbb{H}_{3},g)$ in following figures drawing in $(%
\mathbb{R}_{3},g_{euc})$. \newline
For $c_{1,2.3}=1;\  \ c_{4,5}=0$ and$\  \lambda =1$, the figure 1 at left
side, present the $\mathbf{K}_{1}$-magnetic curve for the first assertion in
$(\mathbb{H}_{3},g)$.
\begin{figure}[h]
\centering
\includegraphics[width=2in,height=1.6in]{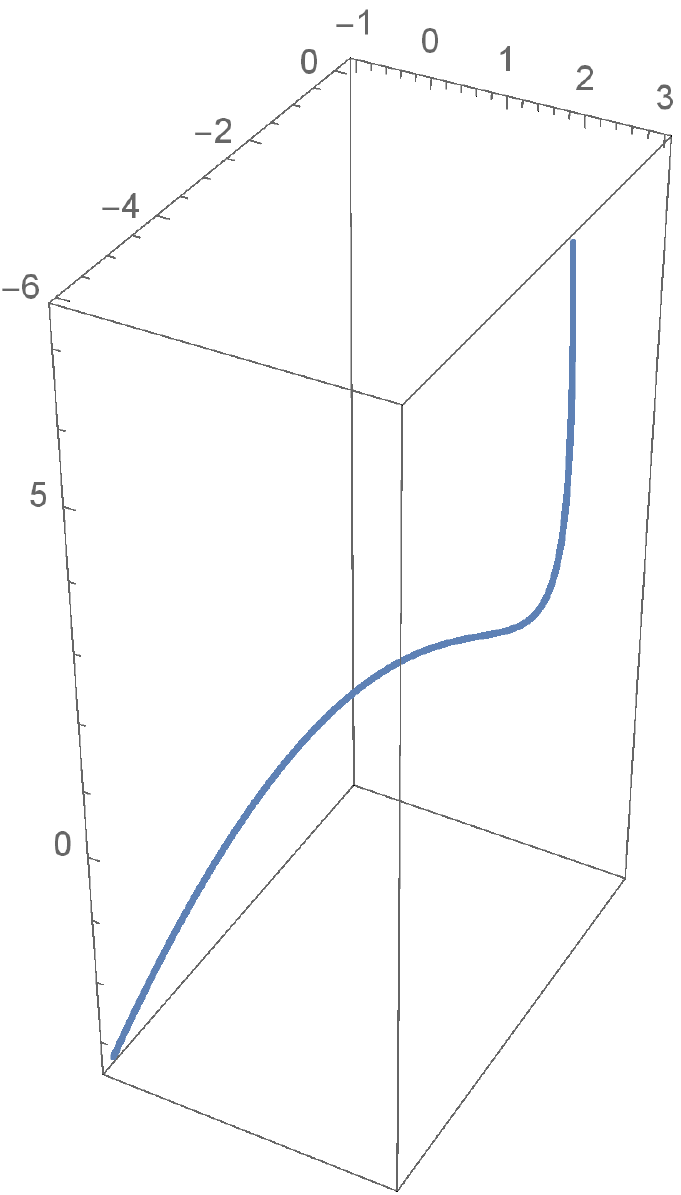} %
\includegraphics[width=2in,height=1.6in]{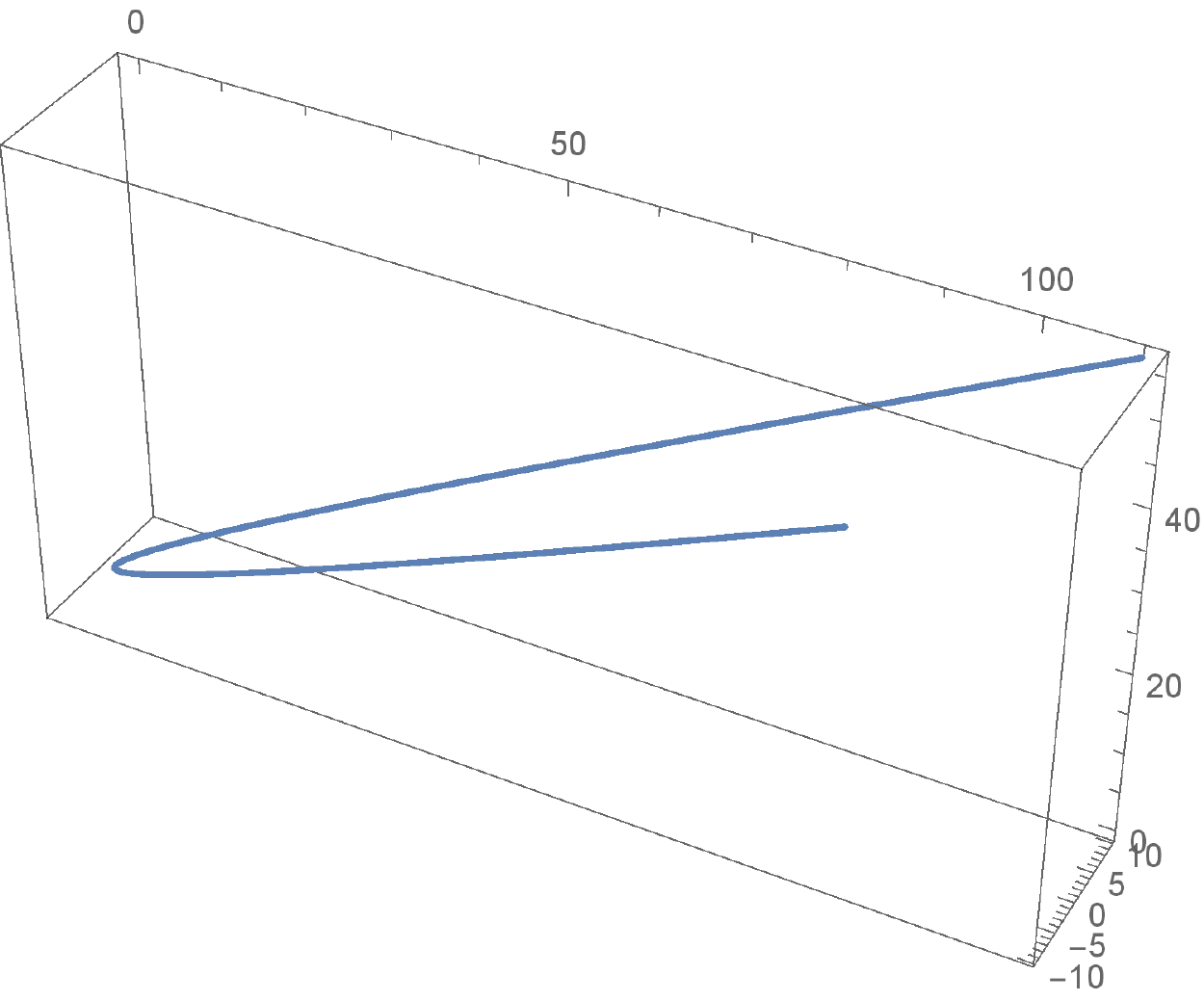}
\caption{$\mathbf{K}_{1}$-magnetic curve}
\end{figure}
\newline
For $c_{1,2}=\lambda =1\ $and$\ c_{3,4,5}=0,$ the figure 1 at right side,
present the $\mathbf{K}_{1}$-magnetic curve for the assertion (2) in $(%
\mathbb{H}_{3},g)$.
\begin{figure}[h]
\centering
\includegraphics[width=2in,height=1.6in]{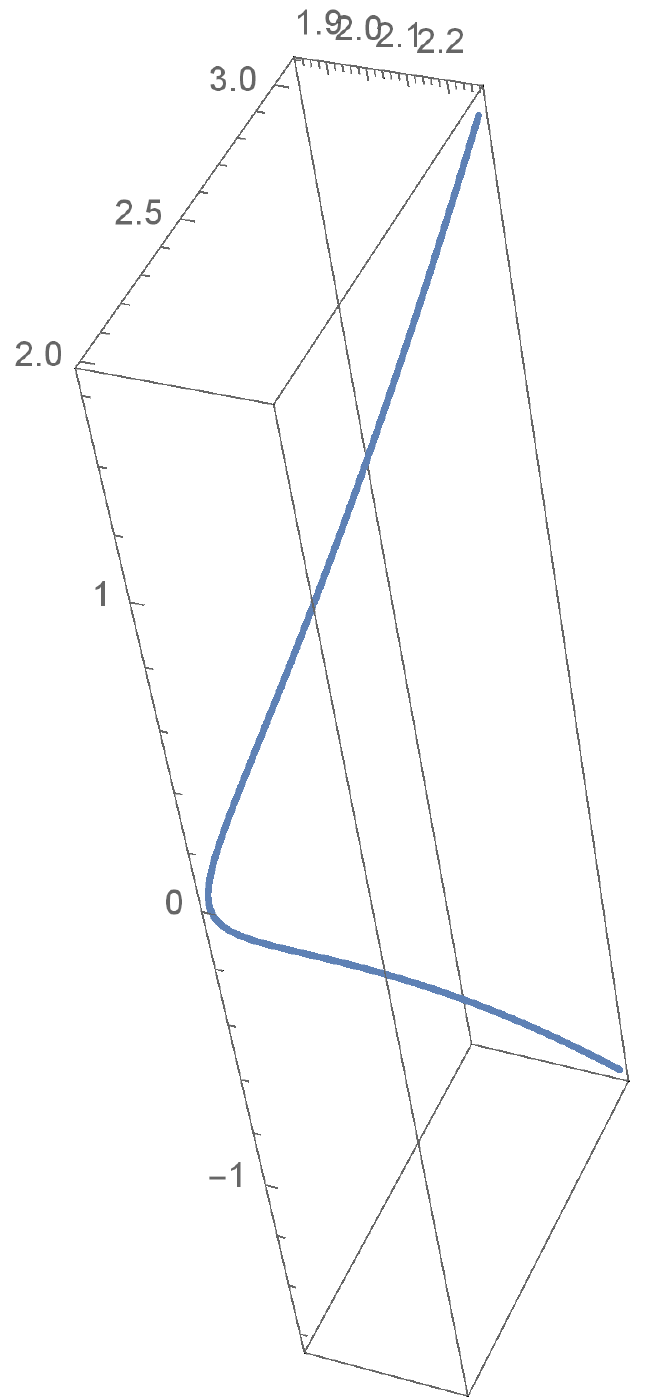}
\caption{$\mathbf{K}_{1}$-magnetic curve}
\end{figure}
\newline
For $c_{1,2,3}=1;\  \ c_{4,5}=0$ and$\  \lambda =1;\ c=\sqrt{2}$ (i.e. $%
\left \vert \lambda c\right \vert >1$)$,$ the figure 2 present the $\mathbf{K}%
_{1}$-magnetic curve for the assertion (3) in $(\mathbb{H}_{3},g)$.

\subsection{$\mathbf{K}_{2}$-magnetic curves\label{ss2}}

Similarly, as in subsection (\ref{ss1}), we have%
\begin{equation}
\mathbf{K}_{2}\times \mathbf{t}=-\frac{x^{\prime }x}{\lambda }e_{1}+\left(
xy^{\prime }-\left( z^{\prime }+xy^{\prime }\right) \right) e_{2}+\frac{%
x^{\prime }}{\lambda }e_{3}  \label{9}
\end{equation}

We can find a same formula as the Eq.(\ref{9}) using the Eqs.(\ref{0.02}, %
\ref{0.03} and \ref{3.12}) to determine the magnetic fields $F_{\mathbf{K}%
_{2}}$and associated skew-symmetric $(1,1)$-tensor $\varphi .$

The $\mathbf{K}_{2}$-magnetic curves are the solution of the differential
equations system
\begin{equation*}
S_{2}:\left \{
\begin{array}{l}
x^{\prime }\left( z^{\prime }+xy^{\prime }\right) =-y^{\prime \prime }-\frac{%
xx^{\prime }}{\lambda } \\
\left( \lambda y^{\prime }+1\right) \left( z^{\prime }+xy^{\prime }\right)
=xy^{\prime }-\frac{x^{\prime \prime }}{\lambda } \\
\left( z^{\prime }+xy^{\prime }\right) ^{\prime }=\frac{x^{\prime }}{\lambda
}%
\end{array}%
\right.
\end{equation*}%
given from the Eqs.(\ref{7}, \ref{9}) and \textsc{Lorentz} equation%
\begin{equation*}
\nabla _{\mathbf{t}}\mathbf{t}=\mathbf{K}_{2}\times \mathbf{t}
\end{equation*}%
after integrating the equation $\left( S_{2_{3}}\right) ,$ we have
\begin{equation}
z^{\prime }+xy^{\prime }=\frac{x}{\lambda }+c\text{ }  \label{9.1}
\end{equation}%
here $c$ is a arbitrary real constant. Substituting the last equation in the
equations $\left( S_{2_{2,3}}\right) ,$ we get the system%
\begin{equation*}
\overline{S}_{2}:\left \{
\begin{array}{l}
x^{\prime }\left( \frac{2x}{\lambda }+c\right) =-y^{\prime \prime } \\
\left( \lambda y^{\prime }+1\right) \left( \frac{x}{\lambda }+c\right)
=xy^{\prime }-\frac{x^{\prime \prime }}{\lambda }%
\end{array}%
\right.
\end{equation*}%
and%
\begin{equation*}
\overline{S}_{2}:\left \{
\begin{array}{l}
y^{\prime }=-\frac{x^{2}}{\lambda }-xc \\
x^{\prime \prime }-cx^{2}\lambda +x\left( 1-c^{2}\lambda ^{2}\right)
+c\lambda =0.%
\end{array}%
\right.
\end{equation*}%
it is very difficult to find exact solutions of the system ($\overline{S}%
_{2} $) for every constant $c$. However, without loss of generality, we can
assume that $c=0,$ the solution can be given as%
\begin{equation*}
\left \{
\begin{array}{l}
x(t)=c_{1}\cos t-c_{2}\sin t \\
y(t)=\frac{1}{2\lambda }c_{1}c_{2}\cos 2t+\frac{1}{4\lambda }\left(
c_{1}^{2}-c_{2}^{2}\right) \sin 2t+\frac{1}{2\lambda }\left(
c_{1}^{2}+c_{2}^{2}\right) t%
\end{array}%
\right.
\end{equation*}%
Substituting the solutions $x(t)$ and $y(t)$ in Eq.(\ref{9.1}) and by an
integration, we have the following solution
\begin{equation*}
z(t)=\frac{1}{\lambda }\left(
\begin{array}{c}
\frac{1}{4}c_{2}\left( c_{1}^{2}-\frac{1}{3}c_{2}^{2}\right) \cos 3t+\frac{1%
}{4}c_{1}\left( \frac{1}{3}c_{1}^{2}-c_{2}^{2}\right) \sin 3t \\
+\left( 1+\frac{3}{4}\left( c_{2}^{2}+c_{1}^{2}\right) \right) \left(
c_{2}\cos t+c_{1}\sin t\right)%
\end{array}%
\right)
\end{equation*}

Then we have the following theorem:

\begin{theorem}
\label{TK2}The space curves given by parametric equations
\begin{equation*}
\gamma (t)=\left \{
\begin{array}{l}
x(t)=c_{1}\cos t-c_{2}\sin t \\
y(t)=\frac{1}{2\lambda }c_{1}c_{2}\cos 2t+\frac{1}{4\lambda }\left(
c_{1}^{2}-c_{2}^{2}\right) \sin 2t+\frac{1}{2\lambda }\left(
c_{1}^{2}+c_{2}^{2}\right) t \\
z(t)=\frac{1}{\lambda }\left(
\begin{array}{c}
\frac{1}{4}c_{2}\left( c_{1}^{2}-\frac{1}{3}c_{2}^{2}\right) \cos 3t+\frac{1%
}{4}c_{1}\left( \frac{1}{3}c_{1}^{2}-c_{2}^{2}\right) \sin 3t \\
+\left( 1+\frac{3}{4}\left( c_{2}^{2}+c_{1}^{2}\right) \right) \left(
c_{2}\cos t+c_{1}\sin t\right)%
\end{array}%
\right)%
\end{array}%
\right.
\end{equation*}%
are $\mathbf{K}_{2}$-Killing magnetic curves in $(\mathbb{H}_{3},g),$ where $%
c_{1},\ c_{2}$ are a real constants.
\end{theorem}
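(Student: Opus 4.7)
The plan is to translate the Lorentz equation $\nabla_{\mathbf{t}}\mathbf{t} = \mathbf{K}_{2}\times \mathbf{t}$ into a coordinate system of ODEs for $x(t)$, $y(t)$, $z(t)$. Combining the coordinate expression of $\nabla_{\mathbf{t}}\mathbf{t}$ from Eq.(\ref{7}) with the cross product $\mathbf{K}_{2}\times \mathbf{t}$ already computed in Eq.(\ref{9}) produces the system $(S_{2})$ displayed above the theorem. The first move is to integrate the third equation of $(S_{2})$ directly in $t$, which yields the first integral
$$z' + xy' = \frac{x}{\lambda} + c,$$
for a real constant $c$. Substituting this identity back into the first two equations of $(S_{2})$ eliminates $z'$ and collapses the problem to the reduced system $(\overline{S}_{2})$ in $x$ and $y$ alone.

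From $(\overline{S}_{2})$, the first equation gives $y'$ algebraically as a polynomial in $x$, while the second, after eliminating $y'$, becomes a nonlinear second-order ODE for $x$:
$$x'' - c\lambda x^{2} + (1 - c^{2}\lambda^{2}) x + c\lambda = 0.$$
For generic $c$, this is of Duffing type and has no closed-form solution in elementary functions, which is the main obstacle; this is precisely the reason the statement restricts attention to a tractable sub-family. The plan is therefore to specialize to $c=0$, under which the equation reduces to the harmonic oscillator $x'' + x = 0$. Its two-parameter general solution is $x(t) = c_{1}\cos t - c_{2}\sin t$, producing the first line of the theorem.

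With $x(t)$ determined, $y(t)$ follows by direct integration of $y' = -x^{2}/\lambda$. Using the identities $\cos^{2}t = \tfrac{1}{2}(1+\cos 2t)$, $\sin^{2}t = \tfrac{1}{2}(1-\cos 2t)$, and $2\sin t\cos t = \sin 2t$, the integration produces the mixture of a linear term in $t$ and $\cos 2t$, $\sin 2t$ terms that appears in the second line of the theorem. Finally, $z(t)$ is recovered from the first integral $z' = x/\lambda - xy'$ (with $c=0$) by substituting the explicit formulas for $x$ and $y'$ and integrating term by term; the products $\cos t\cdot \cos 2t$, $\cos t\cdot \sin 2t$, $\sin t\cdot \cos 2t$, $\sin t\cdot \sin 2t$ linearize via the usual product-to-sum formulas, yielding precisely the $\cos 3t$, $\sin 3t$, $\cos t$, $\sin t$ combinations of the third line. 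Absorbing the additive integration constants into reparameterizations of $y$ and $z$ completes the verification.
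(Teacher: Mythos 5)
Your proposal follows exactly the paper's own route: derive the system $(S_{2})$ from the Lorentz equation using Eqs.~(\ref{7}) and (\ref{9}), integrate the third equation to obtain the first integral $z'+xy'=\tfrac{x}{\lambda}+c$, reduce to a single nonlinear ODE for $x$, specialize to $c=0$ so that $x''+x=0$, and then recover $y$ and $z$ by successive integration with product-to-sum identities. The only caveat --- shared with the paper, which likewise silently drops the integration constant in $y'$ --- is that carefully integrating $y'=-x^{2}/\lambda$ gives the \emph{negative} of the displayed $y(t)$, so the published formula appears to carry a sign slip; otherwise your argument is essentially identical to the paper's.
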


\begin{corollary}
There is no $\mathbf{K}_{2}$-magnetic curves od the space curves of the
Theorem (\ref{TK2}) which is a geodesic in $(\mathbb{H}_{3},g).$
\end{corollary}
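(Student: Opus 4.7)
The plan is to argue by contradiction. Suppose some regular curve $\gamma$ given by the parametric formulas of Theorem~\ref{TK2} is simultaneously a geodesic of $(\mathbb{H}_{3},g)$. Then $\gamma$ satisfies both the Lorentz equation $\nabla_{\mathbf{t}}\mathbf{t}=\mathbf{K}_{2}\times \mathbf{t}$ (by construction of the curve) and the geodesic equation $\nabla_{\mathbf{t}}\mathbf{t}=0$. Subtracting these forces the pointwise identity $\mathbf{K}_{2}\times \mathbf{t}\equiv 0$ along $\gamma$.

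Next I would invoke the explicit expansion of $\mathbf{K}_{2}\times \mathbf{t}$ already computed in Eq.~(\ref{9}). Reading off its components in the orthonormal frame $(e_{1},e_{2},e_{3})$ and setting each to zero yields $x'x\equiv 0$, $z'\equiv 0$ and $x'\equiv 0$, so that $x$ and $z$ must be constant along $\gamma$. Substituting back into Theorem~\ref{TK2}, the formula $x(t)=c_{1}\cos t-c_{2}\sin t$ is constant only if $c_{1}=c_{2}=0$, in which case $y(t)$ and $z(t)$ also collapse to constants. Thus $\gamma$ degenerates to a single point and $\mathbf{t}\equiv 0$, contradicting regularity.

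An alternative one-line route avoids Eq.~(\ref{9}) entirely: the third component of the geodesic system $S_{G}$ requires $(z'+xy')'=0$, while the integrated $\mathbf{K}_{2}$-identity (\ref{9.1}) gives $(z'+xy')'=x'/\lambda$; imposing both yields $x'\equiv 0$ and one arrives at the same degeneracy. The argument is entirely algebraic; the only subtle point (rather than an obstacle) is to note that regularity of $\gamma$ rules out the trivial constant solution, which is what forces the contradiction and closes the corollary.
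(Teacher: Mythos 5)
Your proposal is correct, and it proves the corollary by a genuinely different route than the paper. The paper states this corollary without proof; judging from the proof it gives for the analogous $\mathbf{K}_{1}$ corollary, the intended argument is a direct comparison of the explicit parametric formulas of Theorem (\ref{TK2}) with the geodesic formulas of Eq.(\ref{7.01}): the geodesics have $x(t)$ linear or a combination of real exponentials, whereas the $\mathbf{K}_{2}$-magnetic curves have the trigonometric $x(t)=c_{1}\cos t-c_{2}\sin t$, and these families meet only in the degenerate constant case. You instead argue structurally: a curve satisfying both $\nabla_{\mathbf{t}}\mathbf{t}=\mathbf{K}_{2}\times\mathbf{t}$ and $\nabla_{\mathbf{t}}\mathbf{t}=0$ must have $\mathbf{K}_{2}\times\mathbf{t}\equiv 0$, and reading off the $e_{1},e_{2},e_{3}$ components from Eq.(\ref{9}) gives $x'\equiv 0$ and $z'\equiv 0$ (the $e_{2}$ component being $-z'$), which in the family of Theorem (\ref{TK2}) forces $c_{1}=c_{2}=0$ and hence a constant, non-regular curve. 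Your version buys something real: it does not depend on the correctness of the integrated formulas for $y$ and $z$ (only on $x(t)$ being non-constant unless $c_{1}=c_{2}=0$), it pinpoints exactly where the obstruction lives (the $e_{3}$ component $x'/\lambda$ of the Lorentz force, equivalently the mismatch between $(z'+xy')'=0$ and $(z'+xy')'=x'/\lambda$), and the same template transfers verbatim to the $\mathbf{K}_{3}$ and $\mathbf{K}_{4}$ corollaries. The formula-comparison approach is shorter to state but gives less insight and is more fragile. Both arguments correctly use regularity of $\gamma$ to exclude the constant solution.
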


In $(\mathbb{R}_{3},g_{euc}),$ we present an example of $\mathbf{K}_{2}$%
-magnetic curve in $(\mathbb{H}_{3},g)$ in figure $3$ with $c_{1}=2c_{2}=2$
and $\lambda =1.$
\begin{figure}[h]
\centering
\includegraphics[width=2in,height=1.6in]{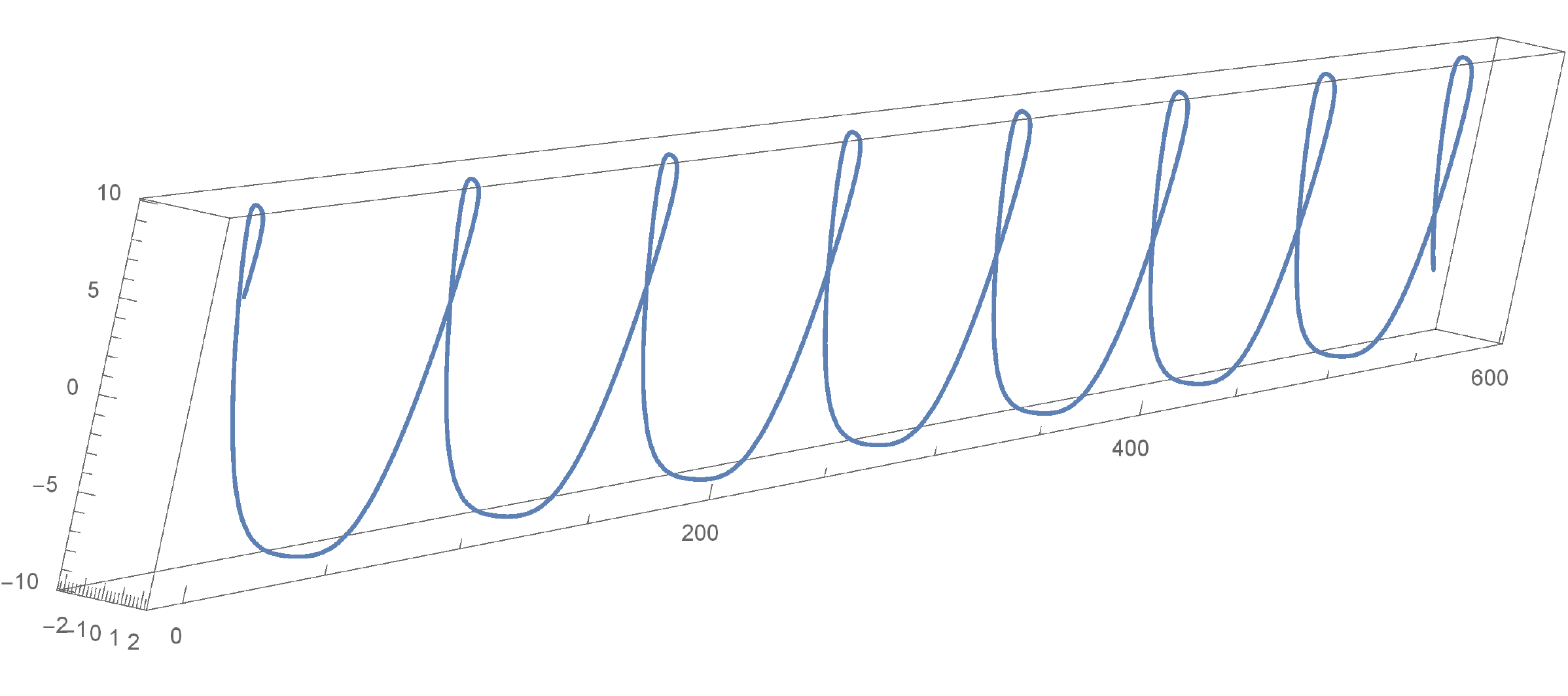}
\caption{$\mathbf{K}_{2}$-magnetic curve in $(\mathbb{H}_{3},g)$ presented
in $(\mathbb{R}_{3},g_{euc})$}
\end{figure}

\subsection{$\mathbf{K}_{3}$-magnetic curves\label{ss3}}

As above in subsections \ref{ss1} and \ref{ss2}, we have the product vector
\begin{equation*}
\mathbf{K}_{3}\times \mathbf{t=}\frac{1}{\lambda }\left( yx^{\prime
}+z^{\prime }+xy^{\prime }\right) e_{1}-yy^{\prime }e_{2}-\frac{1}{\lambda }%
y^{\prime }e_{3}
\end{equation*}%
Also, we can find a same formula as the Eq.(\ref{9}) using the Eqs.(\ref%
{0.02}, \ref{0.03} and \ref{3.12}) to determine the magnetic fields $F_{%
\mathbf{K}_{2}}$and the associated skew-symmetric $(1,1)$-tensor $\varphi .$

Using the Eqs(\ref{7}, \ref{9}) and%
\begin{equation*}
\nabla _{\mathbf{t}}\mathbf{t}=\mathbf{K}_{3}\times \mathbf{t}
\end{equation*}%
we have the differential equations system $\left( S_{3}\right) $%
\begin{equation*}
S_{3}:\left \{
\begin{array}{c}
y^{\prime \prime }+x^{\prime }\left( z^{\prime }+xy^{\prime }\right) =\frac{1%
}{\lambda }\left( z^{\prime }+xy^{\prime }\right) +y\frac{x^{\prime }}{%
\lambda } \\
\frac{x^{\prime \prime }}{\lambda }+\lambda y^{\prime }\left( z^{\prime
}+xy^{\prime }\right) =-yy^{\prime } \\
\left( z^{\prime }+xy^{\prime }\right) ^{\prime }=-\frac{1}{\lambda }%
y^{\prime }%
\end{array}%
\right.
\end{equation*}%
\newline
The integration of the third equation $\left( S_{3_{3}}\right) $, give%
\begin{equation}
z^{\prime }+xy^{\prime }=-\frac{1}{\lambda }y+c  \label{10}
\end{equation}%
substituting the last equation in equations $\left( S_{3_{1,2}}\right) ,$ we
have the differential equations system%
\begin{equation*}
\overline{S}_{3}:\left \{
\begin{array}{c}
y^{\prime \prime }+2\lambda cy^{2}+y\left( \frac{1}{\lambda ^{2}}-\lambda
^{2}c^{2}\right) -\frac{c}{\lambda }=0 \\
x^{\prime }=-\lambda ^{2}cy%
\end{array}%
\right.
\end{equation*}%
Analogously as the subsection \ref{ss2}, it is a true challenge to find
exact solutions for the system ($\overline{S}_{3}$). Therefore, we try to
solve'it in particular case $c=0$.

By integrating the system $\left( \overline{S}_{3}\right) $ in the case $%
c=0, $ we find the general solution%
\begin{equation*}
\left \{
\begin{array}{l}
x(t)=c_{1}t+c_{2} \\
y(t)=c_{3}\exp \left( \frac{\sqrt{2\lambda c_{1}-1}}{\lambda }t\right)
+c_{4}\exp \left( \frac{-\sqrt{2\lambda c_{1}-1}}{\lambda }t\right)%
\end{array}%
\right.
\end{equation*}%
Substituting the solutions $x(t)$ and $y(t)$ in the Eq.(\ref{10}), we get
\begin{equation*}
z(t)=%
\begin{array}{c}
\left( \frac{-c_{3}}{\sqrt{2\lambda c_{1}-1}}-c_{3}\left( c_{2}-\lambda c_{1}%
\sqrt{2\lambda c_{1}-1}+c_{1}t\right) \right) \exp \left( \frac{\sqrt{%
2\lambda c_{1}-1}}{\lambda }t\right) \\
+\left( \frac{c_{4}}{\sqrt{2\lambda c_{1}-1}}-c_{4}\left( c_{2}+\lambda c_{1}%
\sqrt{2\lambda c_{1}-1}+c_{1}t\right) \right) \exp \left( -\frac{\sqrt{%
2\lambda c_{1}-1}}{\lambda }t\right)%
\end{array}%
\end{equation*}%
where $c_{\overline{1,4}}$ are an arbitrary real constants.

\bigskip

Finally, we can present the following theorem.

\begin{theorem}
\label{TK3}The space curves given by parametric equations%
\begin{equation*}
\gamma (t)=\left(
\begin{array}{l}
x(t)=c_{1}t+c_{2} \\
y(t)=c_{3}\exp \left( \frac{\sqrt{2\lambda c_{1}-1}}{\lambda }t\right)
+c_{4}\exp \left( \frac{-\sqrt{2\lambda c_{1}-1}}{\lambda }t\right) \\
z(t)=%
\begin{array}{c}
\left( \frac{-c_{3}}{\sqrt{2\lambda c_{1}-1}}-c_{3}\left( c_{2}-\lambda c_{1}%
\sqrt{2\lambda c_{1}-1}+c_{1}t\right) \right) \exp \left( \frac{\sqrt{%
2\lambda c_{1}-1}}{\lambda }t\right) \\
+\left( \frac{c_{4}}{\sqrt{2\lambda c_{1}-1}}-c_{4}\left( c_{2}+\lambda c_{1}%
\sqrt{2\lambda c_{1}-1}+c_{1}t\right) \right) \exp \left( -\frac{\sqrt{%
2\lambda c_{1}-1}}{\lambda }t\right)%
\end{array}%
\end{array}%
\right)
\end{equation*}%
are $\mathbf{K}_{3}$-magnetic curves in $(\mathbb{H}_{3},g),$ where $c_{%
\overline{1,4}}$ are a real constants.
\end{theorem}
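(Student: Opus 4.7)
My plan is to follow the same template that worked for $\mathbf{K}_{1}$- and $\mathbf{K}_{2}$-magnetic curves: translate the Lorentz equation $\nabla_{\mathbf{t}}\mathbf{t}=\mathbf{K}_{3}\times\mathbf{t}$ into a system of ODEs in $(x,y,z)$, exploit the fact that its third component is a total derivative to obtain a first integral, and then reduce the surviving two equations to a decoupled linear ODE under the simplifying normalization $c=0$ already foreshadowed in the text.

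First, I would use Eqs.(\ref{3}) and (\ref{3.1}) together with the skew-symmetric cross product defined by Eq.(\ref{0.02}) to compute
\[
\mathbf{K}_{3}\times\mathbf{t}=\tfrac{1}{\lambda}\bigl(yx'+z'+xy'\bigr)e_{1}-yy'\,e_{2}-\tfrac{y'}{\lambda}e_{3},
\]
which the paper already records. Equating this with the expression for $\nabla_{\mathbf{t}}\mathbf{t}$ from Eq.(\ref{7}) component-by-component in the orthonormal frame $(e_{i})$ yields exactly the system $(S_{3})$. The $e_{3}$-equation $(z'+xy')'=-y'/\lambda$ integrates immediately to the first integral Eq.(\ref{10}), and I would insert this back into the $e_{1}$- and $e_{2}$-equations to obtain the reduced system $(\overline{S}_{3})$.

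Next I would specialize to $c=0$: the $e_{2}$-equation then reduces to $x''=0$, giving $x(t)=c_{1}t+c_{2}$, while the $e_{1}$-equation becomes the constant-coefficient linear ODE
\[
y''+\frac{1-2\lambda c_{1}}{\lambda^{2}}\,y=0.
\]
Assuming $2\lambda c_{1}-1>0$ (the only regime in which the stated square root is real), its general solution is the exponential combination with frequency $\sqrt{2\lambda c_{1}-1}/\lambda$ displayed in the theorem. Finally I would recover $z(t)$ by substituting the closed forms of $x$, $y$, $y'$ into $z'=-xy'-y/\lambda$ (Eq.(\ref{10}) with $c=0$) and integrating term by term; the $t\exp(\pm\omega t)$ contributions arise because $x(t)$ is linear in $t$ and multiplies the exponential $y'$, which is exactly the source of the $c_{1}t$ factors inside the parentheses in the stated $z(t)$.

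The main obstacle is really the nonlinearity of $(\overline{S}_{3})$ when $c\neq0$: the quadratic term $2\lambda c\,y^{2}$ prevents a clean closed form, which is why the theorem (like Theorem \ref{TK2}) is stated only for the normalized constant $c=0$. Everything else is routine bookkeeping, and a direct verification that the proposed $\gamma(t)$ satisfies all three equations of $(S_{3})$ can be obtained by differentiating the explicit formulas and substituting back into $\nabla_{\mathbf{t}}\mathbf{t}=\mathbf{K}_{3}\times\mathbf{t}$.
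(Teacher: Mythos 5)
Your proposal is correct and follows essentially the same route as the paper: compute $\mathbf{K}_{3}\times\mathbf{t}$, equate with Eq.(\ref{7}) to get $(S_{3})$, integrate the $e_{3}$-component to obtain the first integral Eq.(\ref{10}), set $c=0$, solve the resulting linear ODEs for $x$ and $y$, and integrate Eq.(\ref{10}) for $z$. If anything, you are slightly more careful than the paper's displayed system $(\overline{S}_{3})$, which drops the integration constant in $x'=-\lambda^{2}cy$ even though the final solution $x(t)=c_{1}t+c_{2}$ (and the appearance of $c_{1}$ in the $y$-equation) requires keeping it, exactly as you do via $x''=0$.
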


\bigskip

\begin{corollary}
There is no $\mathbf{K}_{3}$-magnetic curves of the space curves of the
Theorem (\ref{TK3}) which is a geodesic in $(\mathbb{H}_{3},g).$
\end{corollary}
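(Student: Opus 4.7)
The plan is to reduce the claim to the simple algebraic condition $\mathbf{K}_{3}\times \mathbf{t}=0$. Indeed, a regular curve $\gamma$ is simultaneously a geodesic and a $\mathbf{K}_{3}$-magnetic curve iff both sides of the Lorentz equation $\nabla _{\mathbf{t}}\mathbf{t}=\mathbf{K}_{3}\times \mathbf{t}$ vanish, and this is equivalent to $\mathbf{K}_{3}\times \mathbf{t}=0$. Using the explicit formula for $\mathbf{K}_{3}\times \mathbf{t}$ given just before Theorem \ref{TK3}, setting the three components to zero yields $y^{\prime }=0$ (from the $e_{3}$-coefficient), which makes the $e_{2}$-coefficient automatic, together with $yx^{\prime }+z^{\prime }+xy^{\prime }=0$ (from the $e_{1}$-coefficient). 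Hence the required conditions are that $y$ be a constant $y_{0}$ and $z^{\prime }=-y_{0}x^{\prime }$.

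Next I would compare these necessary conditions with the explicit parametric formulas of Theorem \ref{TK3}. The $y$-component there is a sum of two real exponentials with rate $\sqrt{2\lambda c_{1}-1}/\lambda $; since this rate is nonzero under the standing hypothesis of the theorem, $y^{\prime }\equiv 0$ forces $c_{3}=c_{4}=0$. Plugging $c_{3}=c_{4}=0$ into the formula for $z(t)$ then kills every term and yields $z\equiv 0$, so the only candidate is the straight line $\gamma (t)=(c_{1}t+c_{2},0,0)$.

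To close the argument I would compare this candidate against the two geodesic families of the Proposition in Eq.(\ref{7.01}). Family (i) requires $y(t)$ affine with possibly nonzero slope and $z(t)$ genuinely quadratic, which the candidate does not exhibit non-trivially; family (ii) requires $x(t)$ to be a nondegenerate sum of exponentials, whereas Theorem \ref{TK3} yields $x(t)=c_{1}t+c_{2}$ linear in $t$. Hence once the degenerate straight-line case is discarded, no member of the family of Theorem \ref{TK3} coincides with a geodesic of $(\mathbb{H}_{3},g)$.

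The main obstacle is justifying the exclusion of the straight line $\gamma (t)=(c_{1}t+c_{2},0,0)$: this line is formally covered by the parametrization of Theorem \ref{TK3} with $c_{3}=c_{4}=0$, and it genuinely is both a geodesic and a $\mathbf{K}_{3}$-magnetic curve. The corollary therefore has to be read with the tacit non-degeneracy assumption $(c_{3},c_{4})\neq (0,0)$ that is already implicit in the derivation of Theorem \ref{TK3} via the system $\overline{S}_{3}$; once this convention is made explicit, the preceding two paragraphs deliver the conclusion.
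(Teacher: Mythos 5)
Your argument is correct in its logic and takes a genuinely different route from the paper, which states this corollary with no proof at all; judging by the analogous corollary for $\mathbf{K}_{1}$, the intended argument is a direct comparison of the parametric formulas of Theorem~\ref{TK3} with the geodesic formulas of Eq.~(\ref{7.01}). Your reduction --- a curve of Theorem~\ref{TK3} already satisfies $\nabla_{\mathbf{t}}\mathbf{t}=\mathbf{K}_{3}\times\mathbf{t}$, so it is a geodesic iff $\mathbf{K}_{3}\times\mathbf{t}=0$ --- is cleaner and more systematic: it converts the question into the pointwise conditions $y'=0$ and $yx'+z'+xy'=0$, hence into the parameter condition $c_{3}=c_{4}=0$, which is legitimate because $\sqrt{2\lambda c_{1}-1}/\lambda\neq 0$ wherever the formulas of the theorem are defined. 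What this buys, beyond economy, is the discovery that the corollary is literally false as stated: for $2\lambda c_{1}>1$ and $c_{3}=c_{4}=0$ the formulas of Theorem~\ref{TK3} do yield the line $\gamma(t)=(c_{1}t+c_{2},0,0)$, whose tangent is $\tfrac{c_{1}}{\lambda}e_{2}$ with $\nabla_{e_{2}}e_{2}=0$, so it is simultaneously a $\mathbf{K}_{3}$-magnetic curve and a geodesic (it is also the degenerate member of family (i) of Eq.~(\ref{7.01}) with slope of $y$ equal to zero). A formula-matching argument comparing only generic members of the two families misses this. Your proof therefore establishes the corollary exactly under the added hypothesis $(c_{3},c_{4})\neq(0,0)$, and this hypothesis is genuinely needed. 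One small internal inconsistency: your third paragraph suggests the candidate line fits neither geodesic family, which contradicts your (correct) final paragraph --- it does fit family (i) degenerately; the final paragraph is the right reading.
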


The following figure present in $(\mathbb{R}_{3},g_{euc})$ an example of $%
\mathbf{K}_{3}$-magnetic curve in $(\mathbb{H}_{3},g)$ with $c_{\overline{1,4%
}}=1$ and $\lambda =1.$
\begin{figure}[h]
\centering
\includegraphics[width=2in,height=1.6in]{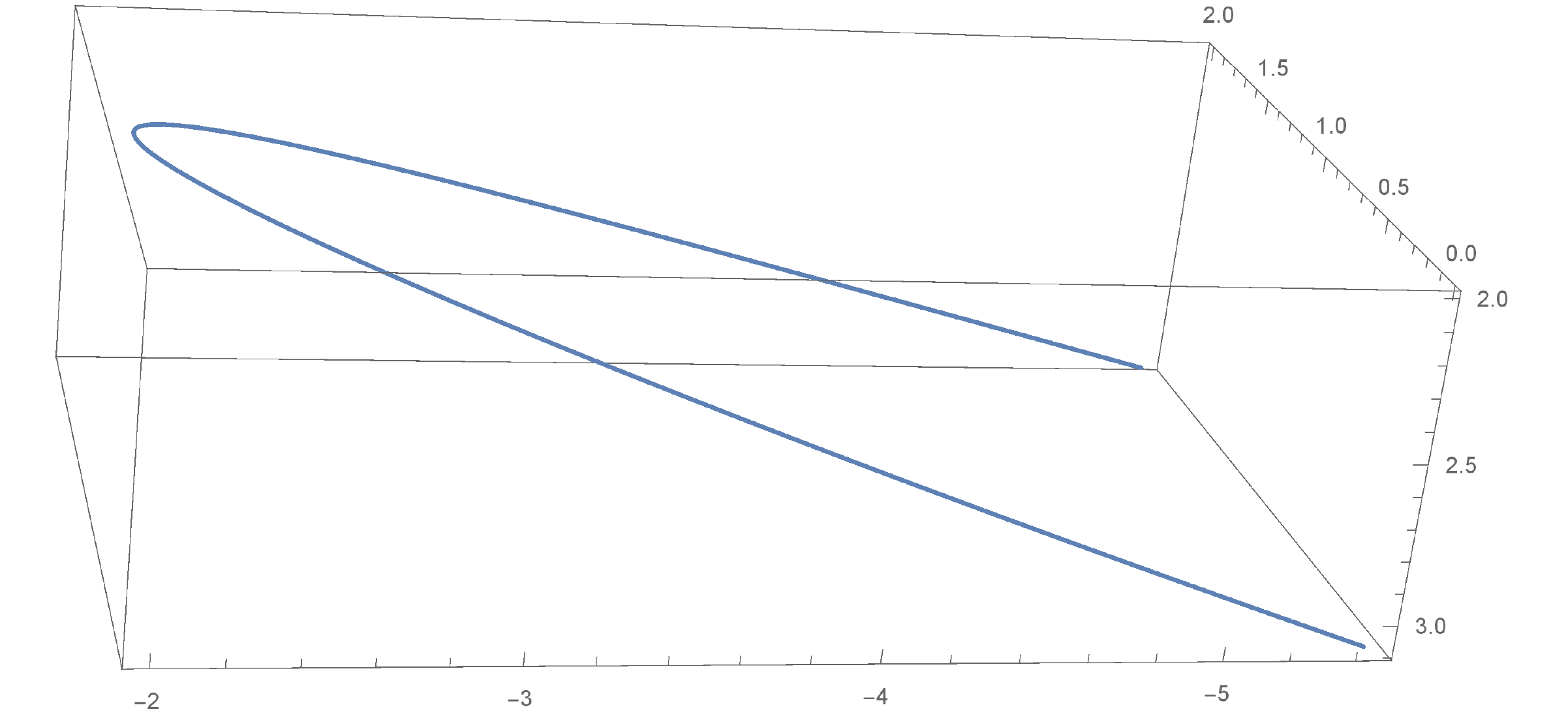}
\caption{$\mathbf{K}_{3}$-magnetic curve in $(\mathbb{H}_{3},g)$ presented
in $(\mathbb{R}_{3},g_{euc})$}
\end{figure}

\vspace{0.5in}

\subsection{$\mathbf{K}_{4}$-magnetic curves}

Finally, as subsections \ref{ss1}, \ref{ss2} and \ref{ss3}, we consider $%
\mathbf{K}_{4}$-magnetic curves which correspond to the Killing vector field
$\mathbf{K}_{4}.$

Firstly, we have the product vector

\begin{eqnarray}
\mathbf{K}_{4}\times \mathbf{t} &=&-\frac{1}{2\lambda }\left(
3x^{2}x^{\prime }-y^{2}x^{\prime }\lambda ^{2}-2y\lambda ^{2}\left(
z^{\prime }+xy^{\prime }\right) \right) e_{1}  \label{11} \\
&&-\frac{1}{2}\left( -3x^{2}y^{\prime }+y^{2}y^{\prime }\lambda
^{2}-2x\left( z^{\prime }+xy^{\prime }\right) \right) e_{2}-\frac{1}{\lambda
}\left( xx^{\prime }+yy^{\prime }\lambda ^{2}\right) e_{3}  \notag
\end{eqnarray}%
The same formula can be found as the Eq.(\ref{11}), using the Eqs.(\ref{0.02}%
, \ref{0.03} and \ref{3.12}) to determine the magnetic fields $F_{\mathbf{K}%
_{4}}$and the associated skew-symmetric $(1,1)$-tensor $\varphi .$

Using the Eqs(\ref{7}, \ref{11}) and \textsc{Lorentz} equation formula%
\begin{equation*}
\nabla _{\mathbf{t}}\mathbf{t}=\mathbf{K}_{4}\times \mathbf{t}
\end{equation*}%
we have the differential equations system $\left( S_{4}\right) $%
\begin{equation*}
S_{4}:\left \{
\begin{array}{l}
y^{\prime \prime }+x^{\prime }\left( z^{\prime }+xy^{\prime }\right) =\frac{3%
}{2\lambda }x^{2}x^{\prime }-\frac{\lambda }{2}y^{2}x^{\prime }-\lambda
y\left( z^{\prime }+xy^{\prime }\right) \\
x^{\prime \prime }+\lambda ^{2}y^{\prime }\left( z^{\prime }+xy^{\prime
}\right) =\frac{3\lambda }{2}x^{2}y^{\prime }-\frac{\lambda ^{3}}{2}%
y^{2}y^{\prime }+\lambda x\left( z^{\prime }+xy^{\prime }\right) \\
\left( z^{\prime }+xy^{\prime }\right) ^{\prime }=-\lambda yy^{\prime }-x%
\frac{x^{\prime }}{\lambda }%
\end{array}%
\right.
\end{equation*}%
the integration of the third equation $\left( S_{4_{3}}\right) ,$ give%
\begin{equation}
z^{\prime }+xy^{\prime }=-\frac{x^{2}}{2\lambda }-\frac{\lambda }{2}y^{2}+c
\label{11.1}
\end{equation}%
Substituting the last equation in the equations $\left( S_{4_{1,2}}\right) $%
, we get%
\begin{equation*}
\overline{S}_{4}:\left \{
\begin{array}{l}
y^{\prime \prime }-\frac{2}{\lambda }x^{2}x^{\prime }=\frac{1}{2}y\left(
y^{2}\lambda ^{2}+x^{2}\right) -c\left( y\lambda +x^{\prime }\right) \\
x^{\prime \prime }-2y^{\prime }x^{2}\lambda =-\frac{1}{2}x\left(
y^{2}\lambda ^{2}+x^{2}\right) +c\left( x\lambda -cy^{\prime }\lambda
^{2}\right)%
\end{array}%
\right.
\end{equation*}%
It is not essay to solve the last differential equations system $\left(
\overline{S}\right) $ or is not exactly solvable in general case, however we
can solve'it in particular case when $c=0$ and let $x(t)=y(t)$ and taking
account that $y^{2}\lambda ^{2}+x^{2}\neq 0,$ then $\left( \overline{S}%
_{4}\right) $ terns to%
\begin{equation*}
y^{\prime \prime }-\frac{1+\lambda ^{2}}{\lambda }y^{2}y^{\prime }=0
\end{equation*}%
by integrating the last equation with respect to $y$, we have
\begin{equation*}
y^{\prime }-\frac{1+\lambda ^{2}}{3\lambda }y^{3}=c_{1}
\end{equation*}%
where $c_{1}$ is a real constant. Without loss of generality, we can assume
that $c_{1}=0,$ then the non null solution is%
\begin{equation*}
y(t)=\pm \frac{1}{\sqrt{2}\sqrt{c_{2}-\left( \frac{1+\lambda ^{2}}{3\lambda }%
\right) t}}
\end{equation*}%
where $c_{2}$\ is\ a\ constant. From the Eq.(\ref{11.1}), we have%
\begin{equation*}
z(t)=\frac{3}{4}\ln \left \vert \left( \frac{1+\lambda ^{2}}{3\lambda }%
\right) t-c_{2}\right \vert \pm \frac{1}{4\left( c_{2}-\left( \frac{%
1+\lambda ^{2}}{3\lambda }\right) t\right) }+c_{3}
\end{equation*}%
where $c_{2},c_{3}$ are a real constants. then we have the theorem.

\begin{theorem}
\label{TK4}The space curves given by parametric equations%
\begin{equation*}
\gamma (t)=\left(
\begin{array}{l}
x(t)=\frac{1}{\sqrt{2}\sqrt{c_{1}-\left( \frac{1+\lambda ^{2}}{3\lambda }%
\right) t}} \\
y(t)=\pm \frac{1}{\sqrt{2}\sqrt{c_{1}-\left( \frac{1+\lambda ^{2}}{3\lambda }%
\right) t}} \\
z(t)=\frac{3}{4}\ln \left \vert \left( \frac{1+\lambda ^{2}}{3\lambda }%
\right) t-c_{1}\right \vert \pm \frac{1}{4\left( c_{1}-\left( \frac{%
1+\lambda ^{2}}{3\lambda }\right) t\right) }+c_{2}%
\end{array}%
\right)
\end{equation*}%
are $\mathbf{K}_{4}$-magnetic curves in $(\mathbb{H}_{3},g)$, where $%
c_{1},c_{2}$ are an arbitrary real constants.
\end{theorem}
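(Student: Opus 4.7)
The plan is to substitute the proposed parametrization directly into the Lorentz equation $\nabla_{\mathbf{t}}\mathbf{t}=\mathbf{K}_{4}\times\mathbf{t}$ after first deriving the reduced system that governs $\mathbf{K}_{4}$-magnetic curves, following the same pattern used in the previous subsections. Concretely, I would begin by expanding $\mathbf{K}_{4}\times\mathbf{t}$ in the orthonormal frame $(e_{1},e_{2},e_{3})$: writing $\mathbf{K}_{4}$ in the basis via Eq.(\ref{3.1}) and $\mathbf{t}$ via Eq.(\ref{6}), then using the volume-form definition (\ref{0.02}) of the cross product, yields exactly formula (\ref{11}). Matching this component-wise with the expression (\ref{7}) for $\nabla_{\mathbf{t}}\mathbf{t}$ produces the system $S_{4}$.

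Next, I would integrate the third scalar equation of $S_{4}$, whose right-hand side $-\lambda yy'-xx'/\lambda$ is an exact derivative. This gives the first integral (\ref{11.1}), namely $z'+xy'=-x^{2}/(2\lambda)-\lambda y^{2}/2+c$. Substituting this into the first and second equations of $S_{4}$ eliminates the $z$-dependence and reduces the problem to the coupled nonlinear system $\overline{S}_{4}$ in $x$ and $y$ alone.

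The main obstacle is that $\overline{S}_{4}$ is genuinely nonlinear and does not admit a closed-form solution in general. I would therefore restrict to the distinguished case $c=0$ and look for solutions satisfying the symmetry ansatz $x(t)=y(t)$. Under this ansatz, provided $y^{2}\lambda^{2}+x^{2}\neq 0$, both equations of $\overline{S}_{4}$ collapse to the single scalar ODE
\begin{equation*}
y''-\frac{1+\lambda^{2}}{\lambda}\,y^{2}y'=0.
\end{equation*}
A first integration in $t$ yields $y'-\frac{1+\lambda^{2}}{3\lambda}y^{3}=c_{1}$; choosing $c_{1}=0$ without loss of generality reduces this to a separable first-order equation whose nontrivial solution is the stated inverse square-root $y(t)=\pm 1/\bigl(\sqrt{2}\sqrt{c_{1}-\frac{1+\lambda^{2}}{3\lambda}t}\bigr)$, which simultaneously provides $x(t)$.

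Finally, with $x(t)$ and $y(t)$ in hand, I would recover $z(t)$ by integrating the first integral (\ref{11.1}) with $c=0$. Since $x=y$, the right-hand side reduces to $-\frac{1+\lambda^{2}}{2\lambda}y^{2}-xy'$; the first term is proportional to $1/\bigl(c_{1}-\frac{1+\lambda^{2}}{3\lambda}t\bigr)$ and integrates to the logarithmic piece, while $xy'$ integrates to the reciprocal piece, producing exactly the formula for $z(t)$ with the integration constant $c_{2}$. A routine verification that the constructed $(x,y,z)$ satisfies all three equations of the original system $S_{4}$ completes the proof.
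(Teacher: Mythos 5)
Your proposal is correct and follows essentially the same route as the paper: computing $\mathbf{K}_{4}\times\mathbf{t}$ in the frame $(e_{1},e_{2},e_{3})$, forming the system $S_{4}$ from the Lorentz equation, integrating the exact third equation to obtain the first integral, reducing to $\overline{S}_{4}$, specializing to $c=0$ with the ansatz $x(t)=y(t)$ to collapse to the single ODE $y''-\frac{1+\lambda^{2}}{\lambda}y^{2}y'=0$, and integrating twice to recover $x$, $y$ and then $z$. No substantive difference from the paper's argument.
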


\begin{corollary}
There is no $\mathbf{K}_{4}$-magnetic curves of the space curves of the
Theorem (\ref{TK4}) which is a geodesic in $(\mathbb{H}_{3},g).$
\end{corollary}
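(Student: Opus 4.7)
The plan is to verify that the geodesic equation $\nabla_{\mathbf{t}}\mathbf{t}=0$ fails on every curve produced in Theorem \ref{TK4}. Since each such curve is by construction a $\mathbf{K}_{4}$-magnetic curve, the Lorentz equation $\nabla_{\mathbf{t}}\mathbf{t}=\mathbf{K}_{4}\times \mathbf{t}$ holds identically along it, so the corollary reduces to showing that $\mathbf{K}_{4}\times \mathbf{t}$ does not vanish identically. There is no need to match the parametric formulas directly against the geodesic formulas of Eq.(\ref{7.01}), although that comparison will serve as a useful sanity check.

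To carry this out, I would focus on the simplest component of $\mathbf{K}_{4}\times \mathbf{t}$ given by Eq.(\ref{11}), namely the $e_{3}$-component $-\frac{1}{\lambda}(xx'+\lambda^{2}yy')$, which is independent of $z'$ and of the free constant $c$ that was set to zero in the derivation of Theorem \ref{TK4}. Along any curve from that theorem one has $x=\pm y$, because this equality was the ansatz used to arrive at those solutions. Substituting collapses the $e_{3}$-component to $\mp\frac{1+\lambda^{2}}{\lambda}yy'$, so it suffices to verify that the product $yy'$ never vanishes.

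A direct computation shows that $y(t)$ is nowhere zero on its interval of definition, and differentiating gives $y'(t)$ proportional to $(c_{1}-\frac{1+\lambda^{2}}{3\lambda}t)^{-3/2}$, which is also nowhere zero. Consequently the $e_{3}$-component of $\mathbf{K}_{4}\times \mathbf{t}$ is never zero, whence $\nabla_{\mathbf{t}}\mathbf{t}\neq 0$ at every point of $\gamma$, and $\gamma$ is not a geodesic. There is essentially no obstacle here; the only bookkeeping concerns the sign ambiguities in $x=\pm y$ and $y=\pm(\cdots)$, but both leave $yy'$ nonzero, so the conclusion is uniform across all branches. As an independent check, the coordinate functions in Theorem \ref{TK4} behave like $(c_{1}-\frac{1+\lambda^{2}}{3\lambda}t)^{-1/2}$ together with a logarithmic term $\ln|\cdots|$, whereas the geodesic coordinates in Eq.(\ref{7.01}) are polynomial or exponential in $t$; these function classes are linearly independent, which yields an alternative one-line proof of the corollary.
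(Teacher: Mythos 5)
Your argument is correct, and it takes a genuinely different route from the paper. The paper offers no written proof for this corollary; judging from the proof it does give for the analogous $\mathbf{K}_{1}$ corollary, the intended argument is a direct comparison of the parametric formulas of Theorem \ref{TK4} with the geodesic formulas of Eq.(\ref{7.01}) (the coordinates $(c_{1}-\tfrac{1+\lambda^{2}}{3\lambda}t)^{-1/2}$ and $\ln|\cdot|$ versus polynomials and exponentials), which is essentially your closing ``sanity check.'' Your main argument is sharper and more self-contained: since the Lorentz equation $\nabla_{\mathbf{t}}\mathbf{t}=\mathbf{K}_{4}\times\mathbf{t}$ holds along these curves by construction, it suffices to show the right-hand side never vanishes, and the $e_{3}$-component $-\tfrac{1}{\lambda}(xx'+\lambda^{2}yy')$ does the job: with $x=\pm y$ it equals $-\tfrac{1+\lambda^{2}}{\lambda}\,yy'$ (note it is $-$, not $\mp$, since $xx'=yy'$ for either sign choice), and $y$ and $y'$ are manifestly nowhere zero on the domain of definition. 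This buys you a pointwise statement ($\nabla_{\mathbf{t}}\mathbf{t}\neq 0$ everywhere, not merely that the two families of formulas differ) and avoids the delicate issue, latent in the formula-comparison route, of ruling out accidental coincidences between differently written parametrizations. The only cosmetic caveat is that your final ``linear independence of function classes'' phrasing is not quite the right justification for the comparison argument (one needs that the functions cannot be \emph{equal} on an interval, e.g.\ by their singular or asymptotic behaviour), but since that is offered only as a cross-check, it does not affect the validity of the proof.
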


Finally, we present in the following figure in $(\mathbb{R}_{3},g_{euc}),$
an example of $\mathbf{K}_{4}$-magnetic curve in $(\mathbb{H}_{3},g)$ where $%
c_{1}=c_{2}=0$ and $\lambda =1.$
\begin{figure}[h]
\centering
\includegraphics[width=2in,height=1.6in]{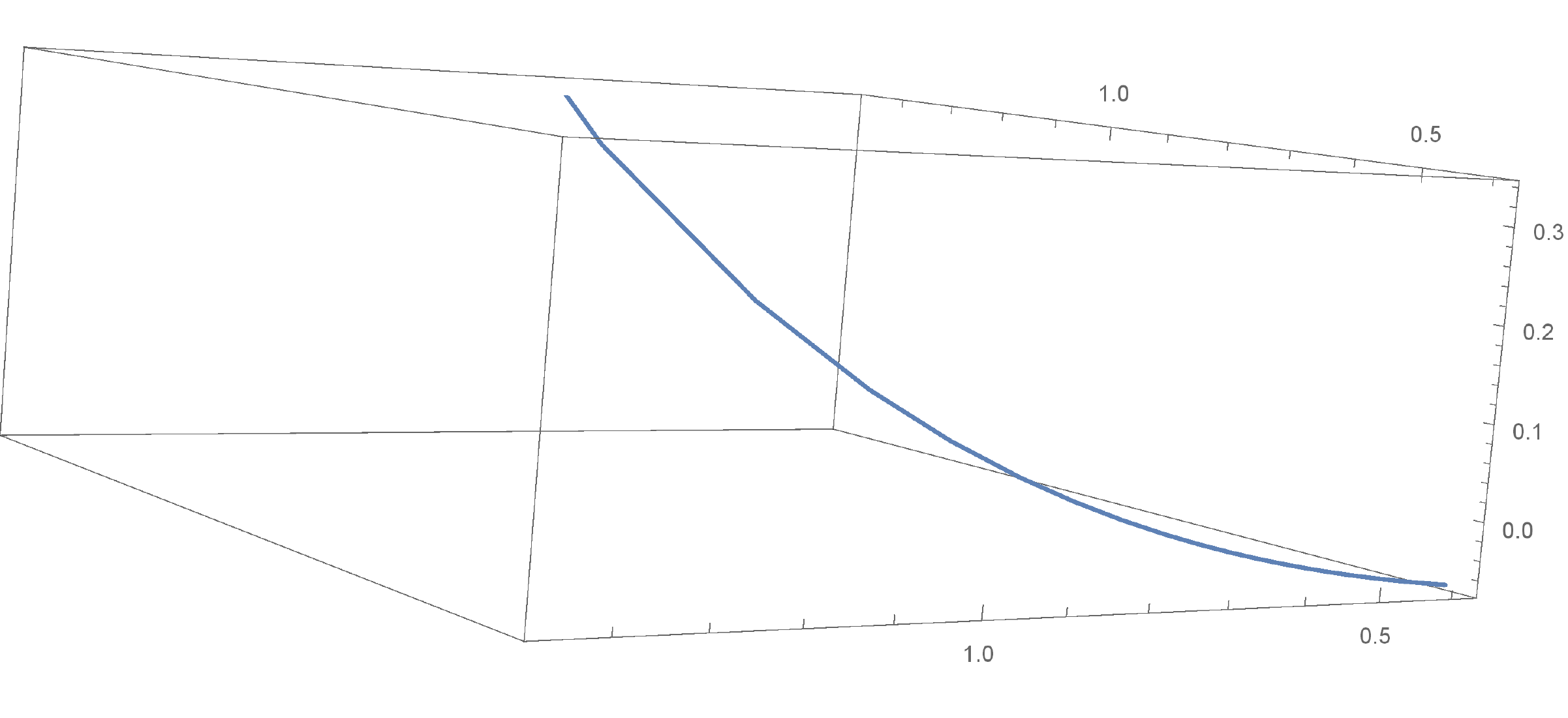}
\caption{$\mathbf{K}_{4}$-magnetic curve in $(\mathbb{H}_{3},g)$ presented
in $(\mathbb{R}_{3},g_{euc})$}
\end{figure}

\end{document}